\DeclareMathAlphabet{\mathpzc}{OT1}{pzc}{m}{it}
\newtheorem{definition}{Definition}
    \newtheorem{Theorem}{Theorem}[section]
\newtheorem{Lemma}[Theorem]{Lemma}
\theoremstyle{remark}
\newcommand{\RR}{\mathbb{R}}
\newcommand{\EE}{\mathbb{E}}
\newcommand{\OO}{\mathcal{O}}
\newcommand{\C}{\mathcal{C}}
\newcommand{\XX}{\mathscr{X}}
\newcommand{\JJ}{\mathcal{J}}
\newcommand{\ZZZ}{\mathscr{Z}}
\newcommand{\YY}{\mathscr{Y}}
\newcommand{\LLL}{\mathscr{L}}
\newcommand{\eps}{\varepsilon}
\newcommand{\defeq}{\stackrel{\bigtriangleup}{=}}
\newcommand{\abs}[1]{\left\vert #1 \right\vert}
\newcommand{\norm}[1]{\left\Vert #1 \right\Vert}
\DeclareMathOperator*{\argmin}{\arg\!\min}
\DeclareMathOperator{\cov}{cov}
\newcommand\reallywidehat[1]{%
\savestack{\tmpbox}{\stretchto{%
  \scaleto{%
    \scalerel*[\widthof{\ensuremath{#1}}]{\kern.1pt\mathchar"0362\kern.1pt}%
    {\rule{0ex}{\textheight}}
  }{\textheight}%
}{2.4ex}}%
\stackon[-6.9pt]{#1}{\tmpbox}%
}
\newrobustcmd*{\mysquare}[1]{\tikz{\filldraw[draw=#1,fill=#1] (0,0)
rectangle (0.2cm,0.2cm);}}
\newrobustcmd*{\mycircle}[1]{\tikz{\filldraw[draw=#1,fill=#1] (0,0) circle [radius=0.1cm];}}
\newlist{myEnum}{enumerate}{9}
\setlist[myEnum,1]{label=\arabic*.}
\setlist[myEnum,2]{label=(\alph*)}
\setlist[myEnum,3]{label=\roman*.}
\setlist[myEnum,4]{label=\Alph*.}
\setlist[myEnum,5]{label=(\arabic*)}
\setlist[myEnum,6]{label=(\Roman*)}
\setlist[myEnum,7]{label=(\Alph*)}
\setlist[myEnum,8]{label=(\roman*)}
\setlist[myEnum,9]{label=(\arabic*)}
\definecolor{myPurple}{rgb}{0.4940, 0.1840, 0.5560}
\definecolor{myGreen}{rgb}{0, 0.5, 0}
\begin{document}
\title{Convergence rates of vector-valued local polynomial regression}
\author[1]{Yariv Aizenbud\footnote{authors contributed equally}}
\author[2]{Barak Sober$^*$}
\affil[1]{Department of Mathematics, Yale University}
\affil[2]{Department of Mathematics and Rhodes Information Initiative, Duke University}

\maketitle
\begin{abstract}
Non-parametric estimation of functions as well as their derivatives by means of local-polynomial regression is a subject that was studied in the literature since the late 1970's. 
Given a set of noisy samples of a $\C^k$ smooth function, we perform a local polynomial fit, and by taking its $m$-th derivative we obtain an estimate for the $m$-th function derivative.
The known optimal rates of convergence for this problem for a $k$-times smooth function $f:\mathbb{R}^d \to \mathbb{R}$ are $n^{-\frac{k-m}{2k + d}}$.
However in modern applications it is often the case that we have to estimate a function operating to $\RR^D$, for $D \gg d$ extremely large.
In this work, we prove that these same rates of convergence are also achievable by local-polynomial regression in case of a high dimensional target, given some assumptions on the noise distribution.
This result is an extension to Stone's seminal work from 1980 to the regime of high-dimensional target domain.
In addition, we unveil a connection between the failure probability $\eps$ and the number of samples required to achieve the optimal rates. 

\end{abstract}
\section{Introduction}

The problem of non-parametric estimation of functions is very well studied in the literature \cite{gyorfi2002distribution,ruppert1994multivariate,stone1980optimal, stone1982optimal,tsybakov2008introduction,wasserman2013all}.
The goal is to estimate a function $f:\RR^d\to\RR$ given a sample $(X, Y)$ drawn from some joint distribution, where $X\in \RR^d$ represents the domain of $f$ and $Y\in \RR$ are noisy evaluations of $f(x)$ at the sampled sites $\{x_i\}_{i=1}^n\subset \RR^d$.

One of the most common approaches to perform such an estimation is through Local Polynomial Regression \cite{cleveland1979robust,stone1977consistent}.
Explicitly, in order to estimate a $k$-times smooth function $f\in \C^k$ at some point $\xi$ we look for a polynomial $\pi^*\in \Pi_{k-1}^{d\to 1}$ of degree less than or equal to $(k-1)$ operating from a $d$-dimensional domain to $\RR$, and which solves the following minimization problem
\begin{equation}\label{eq:PI_star_1D}
   \pi^*_{\xi}(x) = \argmin_{\pi\in \Pi_{k-1}^{d\to1}} \frac{1}{\abs{I_{\xi,n}}}\sum_{i\in I_{\xi,n}} \abs{y_i - \pi(x_i - \xi)}^2
,\end{equation}
where $I_{\xi,n} = \{i ~|~ \norm{\xi-x_i}\leq \epsilon_n\}$, $\epsilon_n$ is a bandwidth parameter that will be discussed at more length later on, and $\abs{I_\xi}$ represents the number of elements in $I_\xi$.
Then, the estimation of $f(\xi)$ is given by $\pi^*_\xi(0)$.

This local polynomial mechanism described for function estimation is also used to estimate differential functionals \cite{ruppert1994multivariate,stone1980optimal,stone1982optimal}.
Let $ \alpha = (\alpha_1, \ldots, \alpha_d) $ be an integer valued multi-index, and define 
\[
\abs{\alpha} = \sum_{j=1}^d\alpha_j
,\]
\[
x^\alpha = x_1^{\alpha_1} x_2^{\alpha_2} \cdots x_d^{\alpha_d}
,\]
\[
\partial^\alpha = \partial^{\alpha_1}_{x^1}\cdots \partial^{\alpha_d}_{x^d}
.\]
And, define $L$ to be the following differential operator of degree $m$
\begin{equation}\label{eq:defL_1D}
    L = \sum_{\abs{\alpha}\leq m}c_\alpha \partial^\alpha 
.\end{equation}
Then the differential functional of degree $m$ defined by
\begin{equation}\label{eq:defT_1D}
    T_\xi(f) = L[f](\xi)
,\end{equation}
can be estimated based upon $n$ evaluations of the function $f$ by
\begin{equation}\label{eq:defTn_1D}
    T_\xi(f) \approx  \widehat T_{\xi,n} \defeq L[\pi^*_\xi](0).
\end{equation}

This estimator have been investigated by Stone in \cite{stone1980optimal} where he proved that it converges point-wise to the exact value at a rate of $\OO(n^{-\frac{k-m}{2k+d}})$. 
Stone gave a global convergence analysis as well to this problem in \cite{stone1982optimal}. 
Later, Ruppert and Wand showed explicit Mean Squared Error (MSE) analysis, with explicit optimal bandwidth computation for the case of function estimation \cite{ruppert1994multivariate}. In addition, they analyze there the convergence of the derivative estimation in the limited case of $d=1$.

However, in modern Big-data related problems it is often the case that functions have a high dimensional target.
Therefore, many works concern some type of regression in high dimensional space \cite{aizenbud2015OutOfSample,belkin2006manifold,fletcher2007riemannian,peterfreund2020local,salhov2020multi,sober2017approximation,steinke2010nonparametric}.
Furthermore, in 
\cite{aamari2017non,aizenbud2021non,boissonnat2014manifold,sober2016MMLS}, the estimation of low dimensional manifolds embedded in very high dimensional domain is performed by means of non-parametric estimation. 
Thus, it raises the interest in cases where $f$ operates from $\RR^d$ to $\RR^D$, where $D \gg d$, and $L[f]$ of Eq. \eqref{eq:defT_1D}, receives a vector form.

The main result of this current work is the generalization of Stone's point-wise convergence analysis presented in \cite{stone1980optimal} to the vector-valued case.
Differently from Stone, for simplicity we limit the class of functions we analyze to $k$-times smooth functions.
Be that as it may, this restriction can be removed to handle the same class of functions handled by Stone without changing the proofs significantly.
This extension may seem at first glance as trivial by estimating each coordinate separately and using the union bound. 
However, since we assume that the target domain is of very large dimension, taking such a naive approach will result with bounds depending strongly on $D$.
Below we show that under certain noise assumptions (e.g., for noise uniformly distributed in a $\sigma$ ball, or a Gaussian noise with expected noise norm of $\sigma$) the convergence rates are maintained and there is no dependency in $D$.
Furthermore, in addition to the extension of Stone's result we unveil the relationship between the number of samples needed and the fail probability $\eps$. 
This is done through a generalization of the median trick~\cite{alon1999space} and may be of independent interest to the reader.

The convergence rates reported here for the vector-valued case aligns with the optimal rates of scalar-valued function estimation shown in \cite{stone1980optimal}.
In other words, our work shows achievability of these same rates for the vector-valued case.
Thus, as a by-product, we get that under our noise assumptions the optimal convergence rates are the same as the scalar-valued ones and we show that they are achievable by local polynomial regression.

\section{Algorithm description}
The algorithm we present here is a generalization of the local polynomial regression to vector valued functions. For any $\C^k$ function $f:\RR^d\to\RR^D$, and $L$ a differential operator of degree $m$ (see \eqref{eq:defL_1D}) we define the vector-valued functional $T$ by
\[
T(f) = L[f](0)
.\]
Note, that there is a slight discrepancy in notation from \eqref{eq:defT_1D}. 
This difference comes from the fact that we analyze the convergence point-wise, and thus we consider the operator evaluated only at zero (an exact analysis could be done near any point $\xi\in \RR^d$).

Our goal is to provide $\widehat T_n$ an estimate of $T(f)$ based on $n$ noisy samples of $f$ through applying $T$ on the local polynomial regression of $f$.
In other words, we define $\pi^*\in \Pi_{k-1}^{d\mapsto D}$ the vector valued polynomial regression of degree $k-1$ operating from $\RR^d$ to $\RR^D$ through the minimization problem
\begin{equation}\label{eq:pi_star_def}
    \pi^*(x) = \argmin_{\pi\in \Pi_{k-1}^{d\mapsto D}}\frac{1}{\abs{I_{n}}}\sum_{i\in I_{n}} \norm{y_i - \pi(x_i)}^2
,\end{equation}
where $I_n = \{i ~|~ \norm{x_i}\leq \epsilon_n\}$, and the bandwidth $\epsilon_n$ is chosen such that 
\begin{equation}\label{eq:epsilon_n_req}
0 < \lim\limits_{n\to\infty}n^{\frac{1}{2k+d}} \epsilon_n < \infty.    
\end{equation}
Then, the estimation of $T(f)$ is defined by
\begin{equation*}
    T(f) \approx \widehat T_n \defeq T(\pi^*)
.\end{equation*}
As shown in Theorem \ref{thm:approx} for any $\eps$ there are $N_\eps$ and $C_\eps$ large enough such that for all $n>N_\eps$ the error of this algorithm is bounded by $C_\eps n^{-\frac{k-m}{2k+d}}$ with probability of $1- \eps$.
The steps of this algorithm are summarized in Algorithm \ref{alg:vector_valued_estimation}.
\begin{algorithm}[H]
	\caption{Vector valued function estimation}
	\label{alg:vector_valued_estimation}
	\begin{algorithmic}[1]
		\State {\bfseries Input:}\begin{tabular}[t]{ll}
            $\{x_i\}_{i=1}^n\subset \RR^d $ & \\
            $\{y_i\}_{i=1}^n\subset \RR^D $ & \\
		\end{tabular}
		\State {\bfseries Output:}\begin{tabular}[t]{ll}
				$\widehat T_n \in \RR^D$ & Estimation of L[f](0).\\
		\end{tabular}
		\State Compute $\pi^*$ as in Eq. \eqref{eq:pi_star_def}
		\State Evaluate $T$ on $\pi^*$.
		\State\Return $\widehat T_n = T(\pi^*)$.
	\end{algorithmic}
\end{algorithm}

Unfortunately, Theorem \ref{thm:approx} does not give us direct connection between the failure probability $\varepsilon$ and $C_\varepsilon$, $N_\varepsilon$. For this reason we introduce a variant of the algorithm in Algorithm \ref{alg:vector_valued_median_trick}.
Theorem \ref{thm:N_C_bound} unveils the explicit connection between $\eps$, the failure probability of the algorithms and the required number of samples $N_\eps$ as well as the coefficient $C_\eps$.

\begin{algorithm}[H]
	\caption{Vector valued median trick}
	\label{alg:vector_valued_median_trick}
	\begin{algorithmic}[1]
		\State {\bfseries Input:}\begin{tabular}[t]{ll}
            $\{x_i\}_{i=1}^n\subset \RR^d $ & \\
            $\{y_i\}_{i=1}^n\subset \RR^D $ & \\
		\end{tabular}
		\State {\bfseries Output:}\begin{tabular}[t]{ll}
				$\widehat T_n \in \RR^D$ & Estimation of L[f](0).\\
		\end{tabular}
		\State Split the data into $\nu = \OO(\ln \frac{1}{\eps})$ equal parts.
		\State Apply Algorithm \ref{alg:vector_valued_estimation} on each part to get $T_{n/\nu}^1, \ldots T_{n/\nu}^\nu $.
		\State Find a ball of radius  $2 \frac{C_{\eps_0} \nu^r}{n^r}$ that has more than half of the estimates $\widehat T_{n/\nu}^i$ in it.
		\State\Return $\widehat T_n$ - the center of that ball.
	\end{algorithmic}
\end{algorithm}

\section{Main Result}
An important point in the high dimensional case is the noise distribution. That is, assuming that the noise is distributed in a ``cube"-like region, will cause the volume of the sampled region to grow exponentially with $D$. 
To avoid such dependency on the dimension $D$, we require that the noise vector $Z\in \RR^D$ will have a covariance matrix that satisfies $\|\cov(Z)\|_{op} \leq \sqrt{\frac{c}{D}}$. 
Examples for such distributions include a uniform distribution on a sphere or ball of some radius $\sigma$. 
Another example is a symmetric vector valued normal distribution with a bounded expected value of the norm. Such a normal distribution will has a covariance matrix of $\sqrt{\frac{c}{D}} I_D $. In general, any spherically symmetric  noise model, whose radial part have constant variance will fit our restrictions.

\subsection{Sampling assumptions and notations}\label{sec:sampling_assumptions}
We assume that there is a $\C^k$ function $f:\RR^d\to\RR^D$. Let $ (X,Y) $ be a pair of random variables such that $ X $ in $ \RR^d $ and $ Y $ in $ \RR^{D} $.
	Assume that the distribution of $ X $ is absolutely continuous and that its density is bounded away from zero and from infinity on $ U $, an open neighborhood of the origin of $ \RR^d $. We further assume that
	\[
	f(x) = \EE(Y ~|~ X=x).
	\]
Denote $Z = f(X)-Y$ to be the random noise. 
We assume that the noise covariance satisfies
\begin{equation}\label{eq:cov_Z_req}
    \|\cov(Z|X=x)\|_{op} = \|\cov(Y|X=x)\|_{op} \leq \sqrt{c/D},
\end{equation}
for some constant $c$.

Let $M$ be a matrix in $ \RR^{n\times m}$.
Drawing inspiration from programming languages, we denote the matrix $ij$-th entry by $M[i,j]$, the $i$-th row of the matrix by $M[i,:]$ and the $j$-th column by $M[:,j]$.
Furthermore, for convenience we refer to column vectors $v\in \RR^D$ as matrices of size $D\times 1$, and its $i$-th coordinate by $v[i]$.

Throughout the paper we denote the standard Euclidean norm by $\| \cdot \|$ and the Euclidean operator norm by $\| \cdot \|_{op}$.

\subsection{The main theorems}
We begin by showing, in Theorem \ref{thm:approx}, the convergence rates of Algorithm \ref{alg:vector_valued_estimation}.
Our proofs laid in Section \ref{sec:proof_thm_approx} follow the foot-steps of Stone's seminal work \cite{stone1980optimal} with the required adaptations for the high-dimensional case.

\begin{Theorem}\label{thm:approx}
	Let $f:\RR^d \to \RR^D, X, Y, Z$ be as defined in the sampling assumptions in Section~\ref{sec:sampling_assumptions}. Let $ L = \sum_{\abs{\alpha}\leq m}c_\alpha \partial^\alpha $, where $\alpha$ is an integer valued multi-index, and let $T$ be the functional
	\[
	T(f) = L[f](0)\in \RR^D
	.\]
	Given a sample $ \{(x_i, y_i)\}_{i=1}^{n} $ drawn i.i.d from $ (X, Y) $, we define the local polynomial regressor $ \pi^*  $  of Eq. \eqref{eq:pi_star_def},
	where $ \epsilon_n $ is such that Eq. \eqref{eq:epsilon_n_req} holds.
Furthermore, let us define the estimate
	\[
	\widehat T_{n} = L[\pi^*](0)\in \RR^D
	.\]
	Then, for any $\varepsilon$ there is C and $ N $ large enough such that for any $ n > N $ we have
	\[
	\Pr(\|{\widehat T_{n} - T(f)}\| > \frac{C}{n^r}) \leq \varepsilon
	,\]
	for $ r =\frac{k-m}{2k+d} $.
\end{Theorem}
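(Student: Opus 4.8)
The plan is to follow Stone's decomposition of the estimation error into a deterministic (``bias'') term and a stochastic (``variance'') term, working \emph{conditionally on the design} $\{x_i\}_{i=1}^{n}$ and integrating over its randomness only at the end. The first step is a rescaling: with $u = x/\epsilon_n$, the points entering the fit become points $u_i$ in the closed unit ball, and a polynomial $\pi(x) = \sum_{\abs{\beta}\le k-1} a_\beta x^\beta$ corresponds to $\tilde\pi(u) = \sum_\beta (a_\beta\epsilon_n^{\abs{\beta}})u^\beta$. Writing $\Phi$ for the design matrix with rows $\tilde\phi(u_i) = (u_i^\beta)_{\abs{\beta}\le k-1}$ over $i\in I_n$, least squares is linear in $y$, so $\widehat T_n = L[\pi^*](0) = v^\top(\Phi^\top\Phi)^{-1}\Phi^\top Y$ for a fixed vector $v$ with $\norm{v}=O(\epsilon_n^{-m})$ (the dominant part coming from $\abs{\alpha}=m$, since $\partial^\alpha$ of a monomial at $0$ picks out one coefficient and each such coefficient in the original coordinates carries a factor $\epsilon_n^{-\abs{\alpha}}$). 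Using $Y_i = f(x_i) - Z_i$ this splits as
\[
\widehat T_n - T(f) \;=\; \underbrace{\big(L[\pi^{*,\mathrm{clean}}](0) - L[f](0)\big)}_{\text{bias}} \;-\; \underbrace{\sum_{i\in I_n} W_i Z_i}_{\text{stochastic}},
\]
where $\pi^{*,\mathrm{clean}}$ is the fit to the noiseless values $\{f(x_i)\}$ and $W_i = \tilde\phi(u_i)^\top(\Phi^\top\Phi)^{-1}v\in\RR$ is a scalar weight, the \emph{same for every one of the $D$ output coordinates} --- the structural fact that makes the vector-valued problem tractable.

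For the bias term I would first show that the empirical moment matrix $\widehat M_n = \abs{I_n}^{-1}\Phi^\top\Phi = \abs{I_n}^{-1}\sum_{i\in I_n}\tilde\phi(u_i)\tilde\phi(u_i)^\top$ concentrates, with probability tending to $1$, around the positive-definite population moment matrix $M$ of monomials of degree $\le k-1$ against the limiting rescaled density on the unit ball; this uses that the density of $X$ is bounded away from $0$ and $\infty$ near the origin and that $\abs{I_n}\asymp n\epsilon_n^d\to\infty$, and it is essentially Stone's design lemma adapted unchanged. On the event that $\widehat M_n$ is close to $M$, the clean fit reproduces the degree-$(k-1)$ Taylor polynomial of $f$ at $0$ up to the Taylor remainder, which in the rescaled coordinates is $O(\epsilon_n^k)$ uniformly on the unit ball (here $f\in\C^k$ is used); hence the $x^\alpha$-coefficient of $\pi^{*,\mathrm{clean}}$ differs from $\partial^\alpha f(0)/\alpha!$ by $O(\epsilon_n^{k-\abs{\alpha}})$, and applying $L$ at $0$ gives a bias of order $\epsilon_n^{k-m}\asymp n^{-(k-m)/(2k+d)} = n^{-r}$ by the bandwidth condition \eqref{eq:epsilon_n_req}.

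For the stochastic term, conditionally on a good design the $Z_i$ are independent with mean $0$ and covariance $\Sigma_i=\cov(Z\mid X=x_i)$, so $\EE\big[\norm{\sum_i W_i Z_i}^2 \mid \{x_i\}\big] = \sum_i W_i^2\operatorname{tr}(\Sigma_i)$; this is the only place $D$ can enter, and assumption \eqref{eq:cov_Z_req} is used here, via $\operatorname{tr}(\Sigma_i)=\EE[\norm{Z}^2\mid X=x_i]$, to keep this term of the same order in $n$ as the bias --- whereas a naive coordinate-wise union bound would pay a price in $D$. It then remains to bound the weights: $\sum_i W_i^2 = v^\top(\Phi^\top\Phi)^{-1}v = \abs{I_n}^{-1}v^\top\widehat M_n^{-1}v \lesssim \abs{I_n}^{-1}\norm{v}^2 \asymp (n\epsilon_n^d)^{-1}\epsilon_n^{-2m} = (n\epsilon_n^{2m+d})^{-1}\asymp n^{-2r}$, again on the good event. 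Hence $\EE\big[\norm{\sum_i W_i Z_i}^2\mid\{x_i\}\big]=O(n^{-2r})$, and Chebyshev gives that the stochastic part exceeds $C n^{-r}$ with conditional probability $O(C^{-2})$. A union bound closes the argument: pick $N=N(\varepsilon)$ so that the good-design event has probability $\ge 1-\varepsilon/2$ for all $n>N$, then pick $C=C(\varepsilon)$ so that the conditional failure probability is $\le \varepsilon/2$; combined with the $O(n^{-r})$ deterministic bias this yields $\Pr(\norm{\widehat T_n - T(f)}>Cn^{-r})\le\varepsilon$.

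The main obstacle I expect is making the design-concentration step rigorous and uniform in $n$: one needs not merely $\widehat M_n\to M$ but that all the ``bad'' events --- too few points in $I_n$, ill-conditioned $\widehat M_n$, weight-sum or bias exceeding their nominal orders --- have total probability below $\varepsilon/2$ simultaneously for every $n$ beyond a threshold depending only on $\varepsilon$. This interplay between the random number of included points and the random moments is the technical heart of Stone's argument. The second, and for this paper the conceptually essential, point is the bookkeeping that isolates $\operatorname{tr}(\cov(Z\mid X=x))$ as the single scalar carrying any $D$-dependence, so that the covariance assumption \eqref{eq:cov_Z_req} can be brought to bear; every other piece of the estimate is genuinely dimension-free because the least-squares weights $W_i$ act identically on all $D$ coordinates.
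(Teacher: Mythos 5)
Your proposal is correct and follows essentially the same route as the paper: the same bias/variance decomposition, the same reliance on Stone's design-matrix concentration for the bias term, and the same structural observation that the least-squares weights $W_i$ are scalar and act identically on all $D$ output coordinates. The only real departure is in the last step of the stochastic bound: you compute
\[
\EE\Big[\big\|\textstyle\sum_i W_iZ_i\big\|^2\,\Big|\,\text{design}\Big]=\Big(\textstyle\sum_i W_i^2\Big)\operatorname{tr}\!\big(\cov(Z)\big)
\]
and apply Markov directly, whereas the paper first shows that the conditional covariance equals $\xi\cov(Z)$ with $\xi=\sum_i W_i^2=\LLL_n^T(\XX_n^T\XX_n)^{-1}\LLL_n$, applies the $D$-dependent Mahalanobis--Chebyshev bound $\Pr\big(G^T\cov(G)^{-1}G>t^2\big)\le D/t^2$, and then cancels the $D$ against the operator-norm bound on $\cov(Z)$. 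These are the same computation up to rearrangement, and yours is arguably cleaner because it isolates $\operatorname{tr}(\cov(Z))=\EE[\norm{Z}^2]$ as the unique scalar carrying any $D$-dependence. One thing worth flagging: for your trace bound to be $D$-free you need $\operatorname{tr}(\cov(Z))=O(1)$, i.e.\ $\norm{\cov(Z)}_{op}=O(1/D)$, which is the scaling implied by the paper's motivating examples (uniform noise on a $\sigma$-ball, Gaussian noise with bounded expected norm) but is stronger than the literal $\sqrt{c/D}$ in Eq.~\eqref{eq:cov_Z_req}, which only gives $\operatorname{tr}(\cov(Z))\le\sqrt{cD}$. The paper's own route has a matching hiccup (it uses $\sqrt{v^TA^{-1}v}\ge\norm{v}/\sigma_1(A)$ in place of the correct $\norm{v}/\sqrt{\sigma_1(A)}$), so the two derivations agree once the intended noise scaling is restored; you would just want to make that scaling explicit rather than inherit the stated constant.
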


Unfortunately, Theorem \ref{thm:approx} does not show an explicit connection between the failure probability $\varepsilon$ and $C_\varepsilon$, $N_\varepsilon$. For this reason we introduce the following theorem

\begin{Theorem}\label{thm:N_C_bound}
Under the assumptions of Theorem \ref{thm:approx}, Algorithm \ref{alg:vector_valued_median_trick} results in an estimator $\widehat{T}_n$ such that for any $ n > N_\eps $ we have
	\[
	\Pr(\|{\widehat T_{n} - T(f)}\| > \frac{C_\eps}{n^{r_m}}) \leq \varepsilon
	,\]
	for $ r_m =\frac{k-m}{2k+d} $, 
	\[
	N_\eps = N_0 \ln \frac{1}{\varepsilon},
	\]
	and 
	\[
	C_\eps = C_0 c_0^{r_m} \left(\ln \frac{1}{\varepsilon}\right)^{r_m},
	\]
	 where $C_0,c_0$ and $N_0$ are some constants.
\end{Theorem}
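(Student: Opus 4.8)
The plan is to view a single execution of Algorithm~\ref{alg:vector_valued_estimation} as a ``weak'' estimator with a fixed, constant success probability, and then to boost its success probability to $1-\eps$ by the aggregation rule of Algorithm~\ref{alg:vector_valued_median_trick}, which is an $\RR^D$-valued substitute for the classical median trick \cite{alon1999space}. Concretely, I would first invoke Theorem~\ref{thm:approx} with the \emph{fixed} failure probability $\eps_0 = \tfrac14$: this produces constants $C_{\eps_0}$ and $N_{\eps_0}$ such that a run of Algorithm~\ref{alg:vector_valued_estimation} on $M$ i.i.d.\ samples returns an estimate within $C_{\eps_0}\,M^{-r_m}$ of $T(f)$ with probability at least $\tfrac34$, whenever $M > N_{\eps_0}$. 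Since $\RR^D$ has no canonical median, the boosting step cannot take a coordinatewise median; instead it uses the ball-covering rule of step~5, whose analysis is purely metric and hence does not see $D$ at all.

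Next I would set up the per-group guarantee. Split the $n$ samples into $\nu$ disjoint groups of size $M = \lfloor n/\nu\rfloor$, with $\nu = \lceil a\ln\tfrac1\eps\rceil$ for a constant $a$ fixed below; the groups are mutually independent and each is i.i.d.\ from $(X,Y)$. As long as $n > \nu N_{\eps_0}$ we have $M > N_{\eps_0}$, so Theorem~\ref{thm:approx} applies to every group, and, writing $\rho := C_{\eps_0}\,\nu^{r_m}/n^{r_m}$, each estimate $\widehat T^{\,i}_{n/\nu}$ is ``good'', i.e.\ lies within distance $\rho$ of $T(f)$, independently with probability at least $\tfrac34$.

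Then I would carry out the concentration and geometric steps. The number of good estimates stochastically dominates a $\mathrm{Bin}(\nu,\tfrac34)$ variable, so Hoeffding's inequality gives that at most $\nu/2$ of them are good with probability at most $e^{-\nu/8}$; choosing $a = 8$ makes this $\le \eps$. On the complementary event more than half of the $\widehat T^{\,i}_{n/\nu}$ lie in the ball of radius $\rho \le 2\rho$ about $T(f)$, so step~5 is feasible and it outputs the center $\widehat T_n$ of some ball of radius $2\rho$ that contains more than half the estimates; that ball then meets the good set, so picking a good point $p$ inside it and using the triangle inequality gives $\|\widehat T_n - T(f)\| \le \|\widehat T_n - p\| + \|p - T(f)\| \le 2\rho + \rho = 3\rho$.

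It remains to read off the constants. With probability at least $1-\eps$ we have obtained
\[
\|\widehat T_n - T(f)\| \le 3\rho = 3C_{\eps_0}\,\frac{\nu^{r_m}}{n^{r_m}} \le 3C_{\eps_0}\,\frac{\big(8\ln\frac1\eps + 1\big)^{r_m}}{n^{r_m}} \le C_0\, c_0^{r_m}\,\frac{\big(\ln\frac1\eps\big)^{r_m}}{n^{r_m}},
\]
for suitable absolute constants $C_0 \asymp 3C_{\eps_0}$, $c_0 \asymp 9$ (for $\eps$ below an absolute threshold; larger $\eps$ is already covered by Theorem~\ref{thm:approx}), while the hypothesis $n > \nu N_{\eps_0}$ becomes $n > N_0 \ln\frac1\eps$ with $N_0 \asymp 8 N_{\eps_0}$ --- exactly the claim. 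I do not expect a genuine obstacle here: given Theorem~\ref{thm:approx} the argument is bookkeeping, and the only points that need care are the metric lemma certifying that the ball rule of step~5 is correct and dimension-free, the rounding of $\nu$ and $n/\nu$, and the ``more than half'' versus ``at least half'' edge case, which is why $1-\eps_0$ must be kept bounded away from $\tfrac12$.
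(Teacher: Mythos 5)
Your proof is correct and follows essentially the same route as the paper, which factors the argument through Lemma \ref{lem:vector_median_trick} (the $\RR^D$-valued median trick): split into $\nu = \Theta\bigl(\ln\frac{1}{\eps}\bigr)$ groups, invoke Theorem \ref{thm:approx} at a fixed failure level $\eps_0$, apply Hoeffding, and aggregate via the ball-covering rule of Algorithm \ref{alg:vector_valued_median_trick}. As a minor note, your triangle-inequality bound $\|\widehat T_n - T(f)\| \le 3\rho$ is in fact the correct conclusion of the covering argument, whereas the paper's proof of the lemma asserts that $T(f)$ lies \emph{inside} the output ball of radius $2\rho$ (yielding $2\rho$), which the intersection argument does not quite deliver; the discrepancy is only a constant absorbed into $C_0$ and does not affect the theorem.
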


The proof is trivially followed from Theorem \ref{thm:approx}, and Lemma \ref{lem:vector_median_trick}, a variant of the "Median trick" \cite{alon1999space}.

\begin{Lemma}\label{lem:vector_median_trick}
    Let $T\in \RR^D$ be a value of interest that we are estimating using $n$ data points. Assume that we have an estimator $\widehat T_n $ that satisfies: For any $\eps>0$ there are $\bar{N}_\eps$ and $\bar{C}_\eps$  such that for any $n>\bar{N}_\eps$, 
    \[
	\Pr(\|{\widehat T_{n} - T}\| > \frac{\bar{C}_\eps}{n^r}) \leq \eps
	.\]
	Then, there are global constants $C_0, c_0$ and $N_0$, such that for any $\eps>0$ and for any $n>N_\eps$ Algorithm \ref{alg:vector_valued_median_trick} returns $\widehat T_{n}$
    \[
	\Pr(\|{\widehat T_{n} - T}\| > \frac{C_\eps}{n^r}) \leq \eps
	.\]
	where 
	\[
	N_\varepsilon = N_0 \ln \frac{1}{\varepsilon},
	\]
	\[
	C_\eps = C_0\left(c_0\ln \frac{1}{\varepsilon}\right)^r
	\]
\end{Lemma}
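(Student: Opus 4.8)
The plan is to prove Lemma~\ref{lem:vector_median_trick} by the standard ``median of means'' amplification argument, adapted to a vector-valued target. First I would fix a baseline failure probability, say $\eps_0 = 1/3$ (any constant bounded away from $1/2$ works), and let $\bar{N}_{\eps_0}$, $\bar{C}_{\eps_0}$ be the corresponding constants guaranteed by the hypothesis. Split the $n$ samples into $\nu = \lceil c_1 \ln\frac{1}{\eps}\rceil$ equal blocks, each of size $n/\nu$; as long as $n/\nu > \bar{N}_{\eps_0}$, i.e. $n > N_0 \ln\frac{1}{\eps}$ for a suitable $N_0$, each block produces an independent estimate $\widehat T^{\,i}_{n/\nu}$ which lands within distance $\rho := \bar{C}_{\eps_0}(\nu/n)^r$ of $T$ with probability at least $1-\eps_0 = 2/3$. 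Call such a block ``good.''

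The key geometric observation is that if strictly more than half of the $\nu$ estimates are good, then any two good estimates lie within distance $2\rho$ of one another, so the closed ball of radius $2\rho$ centered at any good estimate contains a strict majority of the estimates; hence the ball that Algorithm~\ref{alg:vector_valued_median_trick} finds (a radius-$2\rho$ ball containing more than half the estimates) must itself intersect the ball $B(T,\rho)$ — because its center is within $2\rho$ of some estimate in it, and if more than half the estimates are good and more than half lie in the algorithm's ball, those two majorities intersect, yielding a point that is both good and in the algorithm's ball. Chasing the triangle inequality through this gives $\|\widehat T_n - T\| \le 5\rho$ or so (the precise constant is routine and gets absorbed into $C_0$). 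So the output is within $O(\rho) = O\big(\bar{C}_{\eps_0} (\nu/n)^r\big) = \frac{C_0 (c_0 \ln(1/\eps))^r}{n^r}$ of $T$ on the event that a majority of blocks are good.

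It remains to bound the probability that at most half the blocks are good. The number of good blocks stochastically dominates a $\mathrm{Binomial}(\nu, 2/3)$ random variable, so by a Chernoff bound the probability that fewer than $\nu/2$ blocks are good is at most $e^{-c_2 \nu}$ for an absolute constant $c_2>0$; choosing the constant $c_1$ in $\nu = \lceil c_1\ln\frac{1}{\eps}\rceil$ large enough makes this $\le \eps$. Combining, $\Pr(\|\widehat T_n - T\| > C_\eps/n^r) \le \eps$ with $N_\eps = N_0\ln\frac{1}{\eps}$ and $C_\eps = C_0(c_0\ln\frac{1}{\eps})^r$, as claimed.

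The main obstacle — and the only place the vector-valued setting matters — is the geometric majority argument: in $\RR^1$ the median of the estimates is automatically a good estimate, but in $\RR^D$ there is no canonical median, so one must argue via overlapping majorities that \emph{some} radius-$2\rho$ ball captures more than half the points and that any such ball is forced to be near $T$. Getting the constants and the ``strictly more than half'' bookkeeping right (and making sure the radius the algorithm searches for, $2\rho$, is neither too small to exist nor too large to be informative) is the delicate part; everything else (the block-size/sample-count arithmetic, the Chernoff bound, absorbing constants into $C_0,c_0,N_0$) is routine.
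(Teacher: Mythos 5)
Your proposal is correct and is essentially the same argument as the paper's: fix a constant baseline failure level $\eps_0<1/2$, split into $\nu=\Theta(\ln\tfrac{1}{\eps})$ blocks, apply Hoeffding/Chernoff to the indicators of ``good'' blocks, and use the overlapping-majorities/ball argument to recover a point within $O(\rho)$ of $T$. One small note: the triangle-inequality chase gives $\|\widehat T_n-T\|\le 3\rho$ (an estimate in both the algorithm's ball and $B(T,\rho)$ is within $2\rho$ of the returned center and within $\rho$ of $T$), not $5\rho$; this is in fact slightly sharper than the paper's stated ``$T\in B$'' (which would be $2\rho$ and is not quite justified), but since the constant is absorbed into $C_0$ the conclusion is unaffected either way.
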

The proof of Lemma \ref{lem:vector_median_trick} is given in Section \ref{sec:proof_lem_median}

\section{Proofs}
For ease of notation we use the $\OO_p$ notation as defined below.
\begin{definition}\label{def:o_p}
We say that $X_n = \OO_p(1)$ if
for any $\eps>0$ there are $N,M>0$ such that 
\[
P(|X_n| \geq M)<\eps \qquad\mbox{ for all } \qquad n>N.
\]
\end{definition}
\subsection{Proof of Theorem \ref{thm:approx}}
\label{sec:proof_thm_approx}

Following the notation of Stone in \cite{stone1980optimal}, we define $I_n = \{i: 1\leq i\leq n ~,~\|x_i\|<\epsilon_n\}$,
\[
\delta_n = \max\{\|x_i\|: i\in I_n\}
,\]
and denote by $N_n$ the number of points in $I_n$. Since the PDF of $X$ is bounded away from 0 and infinity, we have that $n\epsilon_n^d/N_n = \OO_p(1)$. 
Thus, From Eq. \eqref{eq:epsilon_n_req} we get that $n^\frac{-2k}{2k+d}N_n = \OO_p(1) $ as well. Since $\frac{\delta_n}{\epsilon_n} = \OO_p(1)$ we have, similarly to equation (2.7) in \cite{stone1980optimal}, that 
\begin{equation}\label{eq:delta_k-m}
    \delta_n^{k-m} = n^{-r}\OO_p(1)
\end{equation}
and 
\begin{equation}\label{eq:N_inv_delta-2m}
    N_n^{-1}\delta_n^{-2m} = n^{-2r}\OO_p(1)
\end{equation}

Let $\alpha = (\alpha_1, \ldots, \alpha_d)$ be a non-negative integer multi-index, and denote by $A$ the set of all multi-indices
\[
A =\{\alpha = (\alpha_1, \ldots, \alpha_d) ~|~ \abs{\alpha} = \sum_{j=1}^d \alpha_j \leq k~,~ \alpha_j \geq 0\}
\]
Let $\XX_n \in \RR^{N_n \times |A|}$ be the matrix with entries
\[
\XX_n[i,\alpha] = \frac{x_i^\alpha}{\delta_n^{|\alpha|}} \in \RR
,\]
where for $v = (v_1, \ldots, v_d)^T\in \RR^d$
\[
v^\alpha =\prod\limits_{j=1}^d v_j^{\alpha_j}.
\]
Note, that 
\[
(\XX_n^T \XX_n)[\alpha,\beta] = \sum_{i\in I_n} \frac{x_i^\alpha x_i^\beta}{\delta_n^{|\alpha|} \delta_n^{|\beta|}} \qquad\qquad \alpha,\beta \in A
.\]
Following Eq. (2.9) of \cite{stone1980optimal}, we have that 
\begin{equation}\label{eq:N_xTX}
    N_n (\XX_n^T \XX_n)^{-1} = \OO_p(1).
\end{equation}

Define by $f_k$ the vector valued $(k-1)$ degree Taylor approximation of $f$ at $0$.
Let $\YY_n\in \RR^{N_n \times D}$ be the samples of $f(x)$ corresponding to the sites $x_i$ for $i\in I_n$.
Define $\JJ_n,\JJ_{kn},\ZZZ_n \in \RR^{N_n\times D}$ to be the samples of $f(x), f_k(x)$, and the stochastic sample errors correspondingly; i.e.,
\begin{align*}
    \JJ_n[i,:] &= f(x_i)^T\\
    \JJ_{kn}[i,:] &= f_k(x_i)^T \\
    \ZZZ_n &= \YY_n-\JJ_n.
\end{align*}
We denote by $\ZZZ[i,j]$ the $i,j$th entry of $\ZZZ_n$. Furthermore, we note that each row of $\ZZZ_n$ (i.e. $\ZZZ[i,:]$) is distributed like $Z = Y-f(X)$ and from Eq. \eqref{eq:cov_Z_req} we have that 
\begin{equation}\label{cov_Z_bound}
    \|\cov(\ZZZ[i,:]|\XX) \| \leq \sqrt{c/D}
.\end{equation}
Note, that 
\[
\JJ_{kn}[i,j] =  \sum_{|\alpha|\leq k}\frac{x_i^{\alpha} \partial^\alpha f^{(j)}(0) } {\abs{\alpha}!}  
=
\sum_{|\alpha|\leq k}    
\left(
\frac{x_i^{\alpha}} {\delta_n^{|\alpha|}}
\right) \frac{\delta_n^{|\alpha|} \partial^\alpha f^{(j)}(0) } {\abs{\alpha}!}  \qquad \qquad i \in I_n
,\]
where $f(x) = (f^{(1)}(x), \ldots, f^{(D)}(x))^T$.
From the fact that $\JJ_{kn}$ are samples of the Taylor expansion of $f(x)$ we get that
\[
((\XX_n^T \XX_n)^{-1} \XX_n^T \JJ_{kn})[\alpha,j] = \frac{\delta_n^{|\alpha|} } {\abs{\alpha}!} \partial^\alpha f^{(j)}(0)
,\]
since the Taylor polynomial minimizes the Least-Squares error.

Accordingly, we define $\LLL_n \in \RR^{|A|}$ by
\begin{equation}\label{eq:L_def}
\LLL_n[\alpha] = \frac{\abs{\alpha}! c_\alpha}{\delta_n^{|\alpha|}}
,    
\end{equation}
where 
$c_\alpha = 0$ for  $ m < |\alpha| \leq k$.
Following these definitions, we have 
\[
T(f)^T = \LLL_n^T (\XX_n^T \XX_n)^{-1} \XX_n^T \JJ_{kn} \in \RR^{1\times D}.
\]
In addition, from the definition of $\pi^*$ in \eqref{eq:pi_star_def} we know that its coefficients in the basis $\{\frac{x^{\alpha}}{\delta_n}\}_{\alpha\in A}$ solve the following Least-Squares equations
\[
\vec \pi^*_c = (\XX_n^T \XX_n)^{-1} \XX_n^T \YY_{n} \in \RR^{|A|}
.\]
Thus, we have that 
\[
\widehat{T}_n^T = \LLL_n^T \vec \pi^*_c = \LLL_n^T (\XX_n^T \XX_n)^{-1} \XX_n^T \YY_{n} \in \RR^{1\times D}
.\]
Since $\YY_n = \ZZZ_n + \JJ_n$, we have,
\begin{equation*}
\widehat{T}_n^T - T(f)^T = \underbrace{\LLL_n^T (\XX_n^T \XX_n)^{-1} \XX_n^T \ZZZ_{n}}_{\mbox{stochastic error}} + \underbrace{\LLL_n^T (\XX_n^T \XX_n)^{-1} \XX_n^T (\JJ_{n}-\JJ_{kn})}_{\mbox{Taylor estimation error}}
\end{equation*}
and by the triangle inequality, we get 
\begin{equation}\label{eq:err_partition}
\|\widehat{T}_n^T - T(f)^T\| \leq \underbrace{\|\LLL_n^T (\XX_n^T \XX_n)^{-1} \XX_n^T \ZZZ_{n}\|}_{\operatorname{Err}_S} + \underbrace{\|\LLL_n^T (\XX_n^T \XX_n)^{-1} \XX_n^T (\JJ_{n}-\JJ_{kn})\|}_{\operatorname{Err}_T}
\end{equation}
From Lemma \ref{lem:DTaylor} we have that 
\[
\|\JJ_n[i,:] - \JJ_{kn}[i,:]\| \leq M\delta_n^k
\]
where $M$ is some constant.
Thus, since $\|x_i\|\leq \delta_n$, we have 
\[\|\XX_n^T (\JJ_{n}-\JJ_{kn})[\alpha,:]\|
=
\|\sum_{i=1}^{N_n} \XX_n^T[\alpha,i] (\JJ_{n}-\JJ_{kn})[i,:] \|
\leq M \sum_{i\in I_n} \frac{\|x_i\|^{|\alpha|}}{\delta_n^{|\alpha|}} \delta_n^k \leq M N_n \delta_n^k \qquad \qquad \mbox{for }\alpha \in A
\]
Therefore, from \eqref{eq:delta_k-m} and \eqref{eq:N_xTX} we have that 
\begin{align}\label{eq:bound_taylor_err}
\operatorname{Err}_T &= \|\LLL_n^T (\XX_n^T \XX_n)^{-1} \XX_n^T (\JJ_{n}-\JJ_{kn})\| \notag\\
&\leq \|\LLL_n^T (\XX_n^T \XX_n)^{-1}\| \cdot \| \XX_n^T (\JJ_{n}-\JJ_{kn})\|\notag\\
& \leq \|\LLL_n^T (\XX_n^T \XX_n)^{-1}\| \cdot \|\mathbbm{1} M N_n \delta_n^k\|\notag\\
& = \|\LLL_n^T N_n(\XX_n^T \XX_n)^{-1}\| \cdot \|\mathbbm{1} M \delta_n^k\|\notag\\
&= n^{-r}\OO_p(1),
\end{align}
Where $\mathbbm{1}$ is a vector of all ones.
Now we concentrate on the stochastic part of Eq. \eqref{eq:err_partition}, namely $\operatorname{Err}_S$. 
Note, that from the linearity of the expected value we get
\begin{equation}\label{eq:bound_E}
\EE(\LLL_n^T (\XX_n^T \XX_n)^{-1} \XX_n^T \ZZZ_{n} | \XX_n) = \vec{0} \in \RR^{1\times D}
.\end{equation}
So, if we bound 
\[
\operatorname{Cov}(\LLL_n^T (\XX_n^T \XX_n)^{-1} \XX_n^T \ZZZ_{n} | \XX_n)
\]
we could use Chebyshev inequality.
Since $\LLL_n^T (\XX_n^T \XX_n)^{-1} \XX_n^T \ZZZ_{n} \in \RR^{1\times D}$, we have from Eq. \eqref{eq:bound_E},
\begin{align} \label{eq:cov_expressionF}
\operatorname{Cov}(\LLL_n^T (\XX_n^T \XX_n)^{-1} \XX_n^T \ZZZ_{n} | \XX_n) &= \EE((\LLL_n^T (\XX_n^T \XX_n)^{-1} \XX_n^T \ZZZ_{n})^T \cdot \LLL_n^T (\XX_n^T \XX_n)^{-1} \XX_n^T \ZZZ_{n} | \XX_n) \notag\\
& = \EE(\ZZZ_{n}^T F \ZZZ_{n} | \XX_n) 
,\end{align}
where we define,
\[
F = (\LLL_n^T (\XX_n^T \XX_n)^{-1} \XX_n^T )^T \cdot \LLL_n^T (\XX_n^T \XX_n)^{-1} \XX_n^T
.\]
Accordingly, although $F\in \RR^{N_n\times N_n}$, it is a rank-1 matrix, and thus, it can be written as 
\[
F = \xi vv^T,
\]
where $\xi$ is the (non-zero) eigenvalue of $F$ and $v \in \RR^D$ with $\| v \| = 1$.
Since for any matrix $A$ a non-zero eigenvalue of $A^TA$ is also an eigenvalue of $AA^T$, we get
\begin{align}
 \xi = \LLL_n^T (\XX_n^T \XX_n)^{-1} \XX_n^T \cdot (\LLL_n^T (\XX_n^T \XX_n)^{-1} \XX_n^T )^T &= \LLL_n^T (\XX_n^T \XX_n)^{-1} \XX_n^T \cdot \XX_n (\XX_n^T \XX_n)^{-1}  \LLL_n \notag \\
 = \LLL_n^T (\XX_n^T \XX_n)^{-1}  \LLL_n
\end{align}
Furthermore, we also know the following facts:
\begin{enumerate}
    \item The entries of $(\XX_n^T \XX_n)^{-1}$ are $\OO_p(1)/N_n$ (see Eq. \eqref{eq:N_xTX})
    \item $\max_j(\LLL_n[j]) = \delta_n^{-m} \OO(1)$ (see Eq. \eqref{eq:L_def})
    \item $N_n^{-1}\delta_n^{-2m} = n^{-2r}\OO_p(1)$ (see Eq. \eqref{eq:N_inv_delta-2m})
\end{enumerate}
Thus, we achieve that 
\[
\xi = n^{-2r}\OO_p(1)
.\]

Thus, we can rewrite Eq. \eqref{eq:cov_expressionF} as
\[
\operatorname{Cov}(\LLL_n^T (\XX_n^T \XX_n)^{-1} \XX_n^T \ZZZ_{n} | \XX_n) = \xi \EE(\ZZZ_n^Tvv^T\ZZZ_n| \XX_n) = \xi \EE( ww^T| \XX_n)
,\]
where 
\[
w^T = v^T \ZZZ =  \left(
\begin{array}{c}
     \sum_1^N v_i \ZZZ[i,1] \\
     \vdots\\
     \sum_1^N v_i \ZZZ[i,D] 
\end{array}
\right)^T\in \RR^{1\times D}
.\]
Thus, we have
\begin{align}
\EE(w w^T| \XX_n)[k,l] &= \EE\left(\sum_{a=1}^N v_a^2 \ZZZ[a,k] \ZZZ[a,l] + \sum_a\sum_{b\neq a} v_a v_b \ZZZ[a,k] \ZZZ[b,l] \bigg{|} \XX_n\right)\notag
\\
(\small{ \ZZZ[a,:] \mbox{ is indep. of } \ZZZ[b,:]})~~~~~~~&=
\EE\left(\sum_{a=1}^N v_a^2 \ZZZ[a,k]  \ZZZ[a,l]\bigg{|} \XX_n\right)\notag
\\
&=
\sum_{a=1}^N v_a^2 \EE(\ZZZ[a,k] \ZZZ[a,l] | \XX_n)\notag
\\
(\small{ \ZZZ[a,:] \mbox{ are identically dist.}})~~~~~~~&=
\EE(\ZZZ[1,k] \ZZZ[1,l]| \XX_n)\sum_{a=1}^N v_a^2\notag
\\
(\small{ \|v\| = 1})~~~~~~~&=
\EE(\ZZZ[1,k] \ZZZ[1,l]| \XX_n)  = \cov(Z| X     )[k,l]
\end{align}
and so $\EE(w w^T) = \cov(Z)$
and, 
\[
\cov(\LLL_n^T (\XX_n^T \XX_n)^{-1} \XX_n^T \ZZZ_{n} | \XX_n) = \xi \cov(Z) 
.\]
Using the vector valued Chebyshev inequality, denoting 
\[
G = (\LLL_n^T (\XX_n^T \XX_n)^{-1} \XX_n^T \ZZZ_{n})^T \in \RR^D
,\]
we have
\[
\Pr \left( \sqrt{G^T\frac{1}{\xi}\cov(Z)^{-1}G} > t | \XX_n\right) \leq \frac{D}{t^2}
.\]
Note, that for any $v \in \RR^D$ and positive definite $A\in \RR^{D\times D}$ with largest eigenvalue $\sigma_1(A)$, 
\[
\sqrt{v^TA^{-1}v} \geq \frac{1}{\sigma_1(A)} \|v\|
.\]
From this, and from \eqref{cov_Z_bound} ( $\sigma_1(\cov(Z)) = \|\cov(Z)\|_{op} =  \sqrt{\frac{c}{D}}$) follows that 
\[
\Pr ( \|g\| > \frac{t \sqrt{\xi c}}{\sqrt{D}} | \XX_n) \leq \frac{D}{t^2}
\]
Taking $t' = \frac{t\sqrt{c} }{\sqrt{D}} $, we have that  $t = \frac{\sqrt{D} t'}{\sqrt{c}} $, and 
\[
\Pr ( \|g\| > t' \sqrt{\xi} | \XX_n) \leq \frac{c}{t'^2}
.\]
Alternatively, for any $\eps$ there is $M_\eps>0$ and $N_\eps$ such that,
\begin{equation}\label{eq:bound_stoc_err_vector}
\Pr ( \|\LLL_n^T (\XX_n^T \XX_n)^{-1} \XX_n^T \ZZZ_{n}\| > M_\eps n^{-r} | \XX_n) \leq \eps \quad \forall n> N_{\eps}
\end{equation}
Combining Eqs. \eqref{eq:err_partition}, \eqref{eq:bound_taylor_err}, and \eqref{eq:bound_stoc_err_vector}, we have 
\[
\|\widehat{T}_n^T - T(f)^T\| = n^{-r}\OO_p(1),
\]
and conclude the proof.

\begin{Lemma}[vector valued Taylor] \label{lem:DTaylor}
Let $f:\RR^d \to \RR^D$ be a $\mathcal{C}^k$ function. Let $f_k$ be the vector valued $(k-1)$ degree Taylor approximation of $f$ at $0$. Then, there is a constant $M$, such that for any point $x\in \RR^d$, with $\|x\| \leq \delta$ we have,
\[
\|f(x)-f_k(x)\|\leq M\delta^k
\]
\end{Lemma}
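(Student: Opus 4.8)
The plan is to reduce the vector-valued statement to $D$ applications of the classical scalar Taylor theorem with Lagrange-type remainder, being careful to extract a single constant $M$ that does not depend on the point $x$ (only on $f$, $k$, $d$, and the neighborhood). First I would write $f = (f^{(1)}, \ldots, f^{(D)})^T$ and $f_k = (f_k^{(1)}, \ldots, f_k^{(D)})^T$, where each $f_k^{(j)}$ is the degree-$(k-1)$ Taylor polynomial of the scalar $\mathcal{C}^k$ function $f^{(j)}$ at the origin. Since $\|f(x) - f_k(x)\|^2 = \sum_{j=1}^D |f^{(j)}(x) - f_k^{(j)}(x)|^2$, it suffices to bound each scalar remainder $|f^{(j)}(x) - f_k^{(j)}(x)|$.

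For the scalar remainder I would invoke the standard multivariate Taylor theorem: for $\|x\| \le \delta$ (with $\delta$ small enough that the segment $[0,x]$ lies in the neighborhood on which $f$ is $\mathcal{C}^k$), there is $\theta \in [0,1]$ with
\[
f^{(j)}(x) - f_k^{(j)}(x) = \sum_{|\alpha| = k} \frac{x^\alpha}{\alpha!}\, \partial^\alpha f^{(j)}(\theta x),
\]
so that $|f^{(j)}(x) - f_k^{(j)}(x)| \le \sum_{|\alpha|=k} \frac{|x^\alpha|}{\alpha!} \sup_{\|u\|\le\delta} |\partial^\alpha f^{(j)}(u)|$. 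Using $|x^\alpha| \le \|x\|^{|\alpha|} = \|x\|^k \le \delta^k$ and setting $M_j = \sum_{|\alpha|=k} \frac{1}{\alpha!} \sup_{\|u\|\le\delta_0} |\partial^\alpha f^{(j)}(u)|$ for a fixed reference radius $\delta_0$, we get $|f^{(j)}(x) - f_k^{(j)}(x)| \le M_j \delta^k$. Then $\|f(x) - f_k(x)\| \le \big(\sum_j M_j^2\big)^{1/2} \delta^k$, and we set $M = \big(\sum_j M_j^2\big)^{1/2}$.

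The one genuinely delicate point — the main obstacle — is making sure $M$ is finite and independent of $x$ uniformly over the fixed neighborhood, and in particular that it does not blow up with $D$. Finiteness over a compact ball of radius $\delta_0$ follows from continuity of the $k$-th partials (that is what $\mathcal{C}^k$ buys us). Independence of $D$ is not actually claimed in this lemma's statement — the constant $M$ is allowed to depend on $f$, hence implicitly on $D$ — but it is worth noting in the write-up that $M$ depends only on $f$ and the chosen neighborhood. If one wants to track the $D$-dependence (as the surrounding paper cares about), one could instead bound $\sum_j M_j^2$ in terms of, say, $\sup_{|\alpha|=k}\sup_{\|u\|\le\delta_0}\|\partial^\alpha f(u)\|^2$, a quantity intrinsic to the vector-valued map; but for the lemma as stated the crude per-coordinate bound with $M = (\sum_j M_j^2)^{1/2}$ is all that is needed. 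I would close by noting the bound holds for all $\|x\|\le\delta$ with $\delta \le \delta_0$, which is the regime in which the lemma is applied (with $\delta = \delta_n \to 0$).
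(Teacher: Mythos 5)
Your proof is correct, but it takes a genuinely different route from the paper's. The paper avoids the coordinate-wise decomposition entirely: it fixes an arbitrary unit vector $v\in\RR^D$, observes that by linearity of the Taylor operator $v^T f_k$ is the degree-$(k-1)$ Taylor polynomial of the \emph{scalar} function $f^v = v^T f$, applies scalar Taylor once to get $|f^v(x)-f^v_k(x)|\le M\delta^k$ with
$M = \max_{\|v\|=1,\,|\alpha|=k,\,\|x\|\le\delta}|\partial^\alpha f^v(x)|$,
and then specializes $v$ to the unit vector along $f(x)-f_k(x)$, so that $|f^v(x)-f^v_k(x)| = \|f(x)-f_k(x)\|$ and the lemma drops out in one line. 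You instead apply scalar Taylor to each coordinate $f^{(j)}$ and recombine by Pythagoras, landing on $M = \bigl(\sum_j M_j^2\bigr)^{1/2}$. Both arguments are valid. What the paper's projection trick buys is that the $D$-independence of the constant is manifest from the start: since $\max_{\|v\|=1}|v^T\partial^\alpha f(x)| = \|\partial^\alpha f(x)\|$, its $M$ is (up to combinatorial factors in $k,d$) just $\max_{|\alpha|=k,\|x\|\le\delta}\|\partial^\alpha f(x)\|$, a quantity intrinsic to the vector-valued map. Your $\bigl(\sum_j M_j^2\bigr)^{1/2}$ naively looks like it could grow with $D$, and while you correctly note this is immaterial for the lemma as stated and sketch how to recover the intrinsic bound, the paper's route makes that extra remark unnecessary. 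Your version is a bit more elementary and equally rigorous; the paper's is the more economical formulation.
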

\begin{proof}
for any vector $v\in \RR^D$ with $\|v\| = 1$ we define $f,f_k:\RR^d \to \RR$, as follows:  $f^v(x) = v^T\cdot f(x)$ and $f^v_k(x) = v^T\cdot f_k(x)$.
Note that from the linearity of the Taylor series operator (i.e., the operator that given a function yields its Taylor series), we have that $f^v_k$ is the Taylor approximation of $f^v$, and thus we have that
\[
|f^v(x) - f_k^v(x)| \leq M\delta^k
,\]
where
\[
M = \max_{\|v\|=1, \abs{\alpha}= k, \norm{x}\leq \delta} \abs{\partial^\alpha f^v(x)}
.\]
Since $|f^v(x) - f_k^v(x)| = |v^T \cdot (f(x) - f_k(x))|$ and since this argument holds for $v = \frac{f(x) - f_k(x)}{\|f^v(x) - f_k^v(x)\|}$ we conclude the proof.
\end{proof}

\subsection{Proof of Lemma \ref{lem:vector_median_trick}} \label{sec:proof_lem_median}

This proof follows the idea of the ``median trick" \cite{alon1999space} for high dimensional data.
Let $0.5>\eps_0>0$ to be set later. Let $\nu$ be some integer to be set later. Set $N_\eps = \nu \bar{N}_{\eps_0}$. Assuming $n>N_\eps$,
we divide our $n$ samples into $\nu$ equal parts, and preform the estimation of $T$ $\nu$ times to get $T_{n/\nu}^1, \ldots T_{n/\nu}^\nu $ such that 
\begin{equation}\label{eq:stone_for_nu_subevents}
\Pr(\|{\widehat T_{n/\nu}^i - T}\| > \frac{\bar{C}_{\eps_0} \nu^r}{n^r}) \leq \eps_0 \qquad \mbox{ for } 1\leq i \leq \nu .
\end{equation}

Using Hoeffding inequality \cite{hoeffding1994probability}, we have that the event in \eqref{eq:stone_for_nu_subevents} holds for at least $(\eps_0-a)\nu$ times (out of $\nu$) with probability of at least $1- exp(-2a^2\nu)$. Thus, for 
\[
\nu = \frac{\ln(\frac{1}{\eps})}{2(0.5-\eps_0)^2}
\]
we have that at least half of the events in \eqref{eq:stone_for_nu_subevents} hold with probability of at least $1-\eps$. In case that at least half of the events in \eqref{eq:stone_for_nu_subevents} hold, we have that any ball $B$ of radius  $2 \frac{C_{\eps_0} \nu^r}{n^r}$ that has more than half of the estimates $\widehat T_{n/\nu}^i$ in it, will also have $ T(f)$ in it. We also know that there is at least one such ball (the one around $T(f)$). Thus, denoting the center of such a ball $B$ by  $\widehat T_{n}$, we have that 
\begin{equation*}
\Pr(\|{\widehat T_{n} - T(f)}\| > 2\frac{C_{\eps_0} \nu^r}{n^r}) \leq \eps
\end{equation*}
or, substituting $\nu$ from above, we have 
\begin{equation*}
\Pr(\|{\widehat T_{n} - T(f)}\| > 2C_{\varepsilon_0}\left(\frac{\ln \frac{1}{\varepsilon}}{2(0.5-\varepsilon_0)^2}\right)^r \frac{1} {n^r}) \leq \eps
\end{equation*}
The required number of points for this algorithm to work is such that $n/\nu \geq \bar{N}_{\eps_0}$, or , in other words,
\[
	N_\varepsilon = \bar{N}_{\varepsilon_0}\frac{\ln \frac{1}{\varepsilon}}{2(0.5-\varepsilon_0)^2}.
\]
Denoting,
\[
	N_0 = \frac{\bar{N}_{\varepsilon_0}}{2(0.5-\varepsilon_0)^2},
\]
and 
\[
C_0 = 2C_{\eps_0} \qquad\qquad c_0 =  \frac{1}{2(0.5-\varepsilon_0)^2}
\]
we conclude the proof by setting $\eps_0 = 0.4$.

\section{Numerical Simulations}
We demonstrate the rates of convergence demonstrated in this paper with the following example.
We estimated a function $f:\RR \to \RR^D$ based on different number of samples, and for different values of $D$. Each of the $D$ coordinates of the function $f$ is a random polynomial of degree two. We ran the experiment for $D = 1,2,10,100,1000$ for number of samples $n$ ranges from $10^2$ to $10^5$. Each experiment was repeated $50$ times. The average over the $50$ experiments, along with the variance are shown in Figure \ref{fig:res}. The figure illustrates that indeed, the rates of convergence are independent of the dimensions $D$. The coefficient $c$ before the asymptotic rate in this example is $c\approx 0.096$. The bandwidth chosen in this experiment is $\epsilon_n =  n^{\frac{-1}{2k+d}}$. Simulation code is available at \textcolor{blue}{\href{https://github.com/aizeny/vector-valued-function-estimation}{https://github.com/aizeny/vector-valued-function-estimation}}.

\begin{figure}
	\centering
	\includegraphics[width=0.6\textwidth]{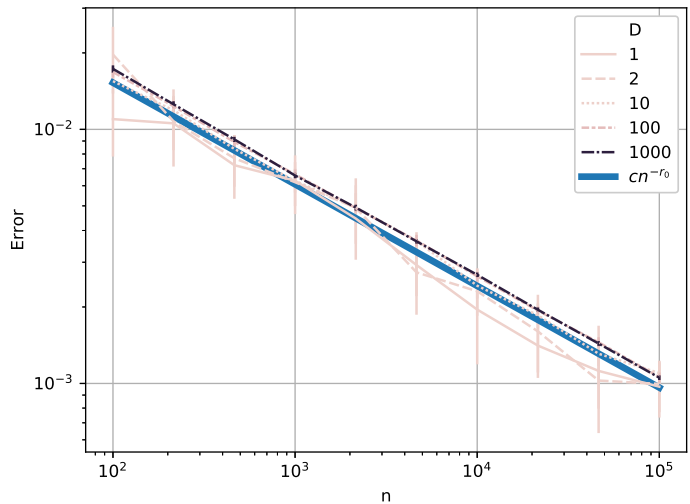}
	\caption{Estimating $f$ for different $n$ (x-axis) and different $D$ (different line color).}
	\label{fig:res}
\end{figure}

\section{Acknowledgments}
We wish to thank Prof. Felix Abramovich for driving us to do this work and for the fruitful discussions.
B. Sober is supported by Duke University and the Simons Foundation through Math+X grant 400837. 

\bibliography{main}{}
\bibliographystyle{plain}

\end{document}